\newcommand{\R}{{\mathbb R}}
\newcommand{\e}{{\varepsilon}}
\newcommand{\eps}{{\varepsilon}}
\newcommand{\tor}{{\mathbb T^2}}
\DeclareMathOperator{\dist}{dist}
\newtheorem{theorem}{Theorem}[section]
\newtheorem{corollary}[theorem]{Corollary}
\newtheorem{lemma}[theorem]{Lemma}
\theoremstyle{definition}
\theoremstyle{remark}
\title[Generation of vortices in the Ginzburg-Landau  heat flow]{Generation of vortices in the Ginzburg-Landau heat flow
 }
\author{Micha{\l } Kowalczyk}
\address{Departamento de Ingenier\'{\i}a Matem\'atica and Centro
de Modelamiento Matem\'atico (UMI 2807 CNRS), Universidad de Chile, Casilla
170 Correo 3, Santiago, Chile
\and
Institute of Mathematics, Polish Academy of Sciences, ul. \'Sniadeckich 8, 00-656, Warsaw, Poland}
\email {kowalczy@dim.uchile.cl}
\thanks{M. K. was partially funded by Chilean research grants FONDECYT 1210405 and ANID projects ACE210010 and FB210005 and National Science Centre, Poland (Grant
No. 2020/37/B/ST1/02742).  Part of this work was done during his stay in the  Institut de Math\'ematiques de Toulouse. }
\author{Xavier Lamy}
\address{Institut de Math\'ematiques de Toulouse; UMR 5219, Universit\'e de Toulouse; CNRS, UPS IMT, F-31062 Toulouse Cedex 9, France.}
\email{Xavier.Lamy@math.univ-toulouse.fr}
\thanks{X.L. was partially funded by ANR project ANR-22-CE40-0006-01.
Part of this work was conducted during X.L.'s stay at the CMM, with the financial support of CNRS and the CMM}
\begin{document}

\begin{abstract}
We consider the Ginzburg-Landau heat flow on the two-dimensional flat torus, starting from an initial data with a finite number of nondegenerate zeros -- but possibly very high initial energy.
We show that the initial zeros are conserved and the flow rapidly enters a logarithmic energy regime, from which the evolution of vortices can be described by the works of Bethuel, Orlandi and Smets.
\end{abstract}

\maketitle

\section{Introduction}

In the flat two-dimensional  torus $\mathbb T^2=\R^2/\mathbb Z^2$ we consider $u(t,x)$, a solution of the Ginzburg-Landau heat flow
\begin{equation}
\label{eq:gl 1}
\begin{aligned}
\partial_t u -\e^2\Delta u &=(1-|u|^2)u\qquad t\geq 0, \; x\in\tor,\\
u(0,x)&=u_0(x),
\end{aligned}
\end{equation}
with $u_0\in C^1(\tor)$.
The initial condition $u_0$ may have a finite number of zeros.
More precisely, we assume that there exists $\alpha_0>0$ such that 
\begin{equation}
\label{eq:u0nondegen}
|u_0(x)|+|\det\nabla u_0(x)|\geq \alpha_0.
\end{equation}
This implies in particular that the zeros of $u_0$ are nondegenerate and the topological  degree of the vector field $u_0$ at each zero is $1$ or $-1$. 

We will denote the energy associated with \eqref{eq:gl 1} by
\[
E_\eps(u)=\int_{\tor}|\nabla u|^2+\frac{1}{4\eps^2}(1-|u|^2)^2.
\] 
Note that \eqref{eq:gl 1} is the $L^2$ gradient flow of $E_\eps$ up to a factor $\eps^2$, hence $E_\eps$ is decreasing along the flow. 
The Ginzburg-Landau heat flow has been extensively studied \cite{BCPS95,JS98-dyn,lin96,SS04,ser07-I,ser07-II,BOS05-coll,BOS07-dyn,BOS07-quant}, in the case of initial data $u_0=u_{0\e}$ satisfying a logarithmic energy bound $E_\e(u_{0\e})\leq M\ln(1/\e)$. 
This bound enables  to identify vortices, the zeros of $u_{0\e}$, and to describe their evolution. 
More precisely, in \cite{JS98-dyn,lin96,SS04}, 
well-prepared initial data are considered, with  a finite number of vortices of degree $\pm 1$ and correspondingly quantized energy.
These works establish via different methods that, in the accelerated time-scale $s=(\e^{2}/\ln(1/\e))t$, vortices move according to the gradient flow of a renormalized energy analyzed in \cite{BBH}, for as long as no collisions happen.
This limitation is removed in the works \cite{ser07-II,BOS07-dyn,BOS07-quant}, where splittings and collisions of vortices are described rigorously.
Specifically, \cite{ser07-II} describes the global-in-time motion of vortices, taking collisions into account, in bounded domains with Dirichlet or Neumann boundary conditions.
Initial well-preparedness is also relaxed: initial vortices are of degree $\pm 1$, but the energy quantization assumption is less stringent; moreover, splitting of higher degree vortices into vortices of degree $\pm 1$ is  described under specific assumptions.
In \cite{BOS07-dyn,BOS07-quant}, the domain is the whole plane and a global motion law allowing for splittings and collisions is obtained, for initial data satisfying the logarithmic bound $E_\e(u_{0\e})\leq M\ln(1/\e)$. 
In case of  $N_\e\gg 1$ initial vortices, evolution of the vortex density is described by a mean-field equation first obtained rigorously in \cite{ser17}.

Here we are interested in initial data that may have much higher energy, and wish to describe the emergence of vortices.
This is mentioned as an open problem in \cite[Problem~5]{BOS08-survey}.
Our methods are strongly inspired by similar results on the emergence of sharp transitions in the Allen-Cahn heat flow \cite{chen04}.

Our first main result concerns the evolution of the zeros of $u$.

\begin{theorem}\label{t:zeros}
There exists $C_0>0$, depending on $u_0$, such that, for all $\e>0$ sufficiently small (depending on $u_0$), if $\mathcal Z(t)$ denotes the set of zeros of $u(t)$, we have
\begin{align*}
\#\mathcal Z(t)=\#\mathcal Z(0), \qquad\text{for } 0\leq t\leq T_\e:=\ln\frac 1\e -\frac 12\ln\ln\frac 1\e - C_0.
\end{align*}
In other words, no new zeros of the vector field $u(t)$ are generated up to $t=T_\eps$. Additionally, if $z_j(t)$ is the evolution of the $j$-th zero $z_{j}^0$ of $u_0$, then $|z_j(t)-z_j^0|\lesssim \e\sqrt{\ln(1/\e)}$, and the topological  degree   of  $u(t)$ at $z_j(t)$ is preserved.
Finally, at $t=T_\e$ we have
\begin{align}\label{eq:baddisksTe}
|u(T_\e,x)|\geq \frac 12 \qquad\text{for }\dist(x,\mathcal Z(0))\gtrsim \e\sqrt{\ln \frac 1\e}.
\end{align}
\end{theorem}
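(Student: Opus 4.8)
The plan is to split the torus into an outer region, where $|u|$ becomes rapidly close to $1$, and $O(\e\sqrt{\ln(1/\e)})$-neighborhoods of the zeros $z_j^0$, where a blow-up analysis is needed. I would first record two a priori bounds valid for all $t\ge 0$: the maximum principle applied to $\partial_t|u|^2-\e^2\Delta|u|^2=-2\e^2|\nabla u|^2+2(1-|u|^2)|u|^2$ gives $\|u(t)\|_{L^\infty(\tor)}\le\max(1,\|u_0\|_{L^\infty(\tor)})$; and differentiating \eqref{eq:gl 1} shows that $|\nabla u|^2$ is a subsolution of $\partial_t W-\e^2\Delta W=2W$ — the remaining nonlinear contributions $2(1-|u|^2)|\nabla u|^2-4\sum_k(u\cdot\partial_k u)^2$ being $\le2|\nabla u|^2$ — so the heat semigroup yields $\e^2|\nabla u(t,x)|^2\le\e^2 e^{2t}\|\nabla u_0\|_{L^\infty(\tor)}^2$. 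The value of $T_\e$ is tuned precisely so that this quantity is $o(1)$ on $[0,T_\e]$, since $\e^2 e^{2T_\e}=e^{-2C_0}/\ln(1/\e)$.

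Setting $w=|u|^2$, the identity $\partial_t w-\e^2\Delta w=-2\e^2|\nabla u|^2+2(1-w)w$ together with the gradient bound shows that $w$ is a supersolution of the scalar reaction--diffusion equation $\partial_t W-\e^2\Delta W=2(1-W)W-2\e^2 e^{2t}\|\nabla u_0\|_{L^\infty}^2$. I would compare $w$ from below with a subsolution built in the spirit of Chen's generation-of-interface construction: $\underline W(t,x)=Y\big(t;\psi(\dist(x,\mathcal Z(0)))\big)$, where $Y(t;\cdot)$ is the flow of the associated spatially homogeneous ODE and $\psi(r)\simeq c_0\min(r^2,1)$ lies below $|u_0|^2$ — this uses \eqref{eq:u0nondegen} in the form $|u_0(x)|\gtrsim\dist(x,\mathcal Z(0))$ near the zeros and $|u_0|\ge c>0$ away from them. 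Since the source term is negligible compared to $Y$ on the relevant time range, $Y$ behaves like the logistic flow $Y\approx\xi e^{2t}/(1+\xi e^{2t})$, so $|u(t,x)|^2\ge\underline W(t,x)\ge\tfrac14$ as soon as $\dist(x,\mathcal Z(0))^2 e^{2t}\gtrsim1$. At $t=T_\e$, after raising the threshold to $\tfrac12$, this is exactly \eqref{eq:baddisksTe}; for $\dist(x,\mathcal Z(0))\ge r_0$ with $r_0$ a small fixed constant it gives $|u(t,x)|\ge c(r_0)>0$ for all $0\le t\le T_\e$. Hence $u(t)$ has no zeros outside the fixed disks $D_j:=\{\dist(\cdot,z_j^0)\le r_0\}$, and since $|u(t)|>0$ on $\partial D_j$ throughout, $t\mapsto u(t)|_{\partial D_j}$ is a homotopy in $\R^2\setminus\{0\}$, so $\deg(u(t),\partial D_j)=\deg(u_0,\partial D_j)\in\{\pm1\}$ and each $D_j$ contains at least one zero.

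To show that each $D_j$ contains exactly one, nondegenerate, zero and to control its position I would blow up: $v(t,y):=u(t,z_j^0+\e y)$ solves the $\e$-free flow $\partial_t v-\Delta v=(1-|v|^2)v$ on $\{|y|<r_0/\e\}$. Although $E_\e(u_0)$ may be very large, that energy lives at the $O(1)$ scale, away from the zeros: since $u_0\in C^1(\tor)$ with $u_0(z_j^0)=0$, on windows $|y|\le R_\e$ with $\e R_\e\to0$ one has $v(0,y)/\e=\int_0^1\nabla u_0(z_j^0+s\e y)[y]\,ds\to\nabla u_0(z_j^0)[y]$, so the rescaled datum is essentially the linear degree-$\pm1$ vortex attached to the invertible matrix $\nabla u_0(z_j^0)$. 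I would then track $v$ through an amplification phase, where it grows (essentially under the flow linearized at $0$) toward a standard vortex profile $V$ of the matching degree, and a relaxation phase governed by the marginally stable linearization at $V$, concluding that $v(t)$ stays locally uniformly close to the family of translated vortices for $0\le t\le T_\e$; hence $u(t)|_{D_j}$ has a single nondegenerate zero $z_j(t)$ of the correct degree, which gives $\#\mathcal Z(t)=\#\mathcal Z(0)$ and preservation of the degree at each zero. For the displacement, only the (small) deviation of $v(0,\cdot)$ from an exact linear vortex can move the zero, and its diffusive spreading over $[0,T_\e]$ shifts the zero of $v(t)$ by $O(\sqrt{T_\e})$ in $y$, i.e. $|z_j(t)-z_j^0|\lesssim\e\sqrt{\ln(1/\e)}$ for all $0\le t\le T_\e$.

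The main obstacle is the inner analysis of the previous paragraph: controlling $v$ uniformly over the long interval $[0,T_\e]$ through the crossover from the linearization at $0$ to the linearization at the vortex $V$ — which has no spectral gap, because of the translation modes and the non-compactness of the plane — and ruling out the birth of spurious zeros near the genuine one, this being where the possibly very high initial energy is most dangerous. By comparison, the sharp neighborhood size $\e\sqrt{\ln(1/\e)}$ in \eqref{eq:baddisksTe} is comparatively soft once the a priori gradient bound is available, since it amounts to tracking a level set of the logistic ODE $\dot Y=2(1-Y)Y$; the precise correction $-\tfrac12\ln\ln(1/\e)$ in $T_\e$ is exactly what keeps the neglected term $\e^2|\nabla u|^2$ of size $o(1)$ on $[0,T_\e]$.
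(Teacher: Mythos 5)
Your outer analysis is plausible and close in spirit to the paper's (both are inspired by Chen's generation-of-interface argument): the comparison of $|u|^2$ with a logistic-type subsolution, using $|u_0(x)|\gtrsim\dist(x,\mathcal Z(0))$ from \eqref{eq:u0nondegen}, can indeed yield \eqref{eq:baddisksTe}, the absence of zeros outside fixed disks $D_j$, and preservation of $\deg(u(t),\partial D_j)$, hence \emph{at least} one zero per disk. But the heart of the theorem — that no new zeros are created near the existing ones, so that $\#\mathcal Z(t)=\#\mathcal Z(0)$ exactly, that the zero in each disk is unique with degree $\pm1$, and that $|z_j(t)-z_j^0|\lesssim\e\sqrt{\ln(1/\e)}$ — is exactly the part you leave to the blow-up/vortex-stability program in your third paragraph, and that program is not a proof: you yourself identify its main step (uniform control of the rescaled solution over the whole interval $[0,T_\e]$, through the crossover from the linearization at $0$ to the marginally stable, gapless linearization at a degree-$\pm1$ vortex, while excluding spurious zeros) as an open obstacle. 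Degree theory on $\partial D_j$ gives only the algebraic count $\sum d=\pm1$ inside $D_j$, which is compatible with, say, two zeros of degree $+1$ and one of degree $-1$; nothing in your argument excludes this. Likewise the displacement bound is argued only heuristically (``diffusive spreading shifts the zero by $O(\sqrt{T_\e})$''), with no mechanism that actually tracks the zero.

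For comparison, the paper never performs an inner vortex analysis. It writes $u=v+e^tw$ with $v=\Phi(t,g)$, $\Phi$ the explicit ODE flow \eqref{eq:Phi} and $g=e^{\e^2t\Delta}u_0$, and proves the pointwise bounds $\|e^tw\|_{L^\infty}\lesssim\e^2e^t(e^{2t}-1)^{1/2}$ and $\|\nabla w\|_{L^\infty}\lesssim\e\sqrt t\,e^t$ via the maximum-principle Lemma~\ref{l:maxpple}, the Gronwall-type Lemma~\ref{l:aux} and Lemma~\ref{l:heatgrad}. Uniqueness of the zero of $u$ in each $\e$-scale disk then follows from a contraction-mapping (local invertibility) argument: $v=\Phi(t)\circ g(t)$ is explicitly invertible near each zero $\hat z_j(t)$ of $g$, with $|\nabla v^{-1}|\lesssim e^{-t}$, so $u=v+e^tw$ is injective there as soon as $\|\nabla w\|_\infty\ll1$; the position estimate comes from tracking $\hat z_j(t)$ by the implicit function theorem ($|\frac{d}{dt}\hat z_j|\lesssim\e/\sqrt t$). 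Note also that the $-\frac12\ln\ln\frac1\e$ correction in $T_\e$ is \emph{not} there to keep $\e^2|\nabla u|^2=o(1)$, as you suggest (for that, $t\le\ln\frac1\e-C$ would suffice); it is dictated precisely by the uniqueness step, i.e.\ by the requirement $\e\sqrt t\,e^t\le e^{-C_0}\ll1$ in the contraction argument — the very step your proposal leaves unproven.
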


Above and throughout the paper  the symbol $A\lesssim B$ for two nonnegative quantities $A,B$ means that there exists a constant $C>0$, depending only on $u_0$,  such that $A\leq C B$.

An immediate corollary of   Theorem~\ref{t:zeros} is that, if $u_0$ does not vanish, then $u(t)$ does not vanish for $0\leq t\leq T_\e$.

\begin{corollary}\label{c:nozero}
If $\mathcal Z(0)=\emptyset$ then $\mathcal Z(t)=\emptyset$ for  $t\in [0,T_\eps]$.
\end{corollary}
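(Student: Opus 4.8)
The plan is to obtain Corollary~\ref{c:nozero} as a direct specialization of Theorem~\ref{t:zeros}. The hypothesis $\mathcal Z(0)=\emptyset$ simply means $\#\mathcal Z(0)=0$; since $u_0\in C^1(\tor)$ and $\tor$ is compact, the absence of zeros gives $\inf_{\tor}|u_0|=:c_0>0$, so the nondegeneracy condition~\eqref{eq:u0nondegen} holds (e.g.\ with $\alpha_0=c_0$) and Theorem~\ref{t:zeros} applies to $u_0$. Its conclusion $\#\mathcal Z(t)=\#\mathcal Z(0)$ for $0\le t\le T_\e$ then forces $\#\mathcal Z(t)=0$, that is $\mathcal Z(t)=\emptyset$, for every $t\in[0,T_\e]$. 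The remaining assertions of Theorem~\ref{t:zeros} are either vacuous here (there are no trajectories $z_j(t)$ and no degrees to preserve) or strengthen to a global statement: \eqref{eq:baddisksTe} becomes $|u(T_\e,x)|\ge \tfrac12$ for all $x\in\tor$.

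The only bookkeeping point is that the constant $C_0$ and the smallness threshold on $\e$ in Theorem~\ref{t:zeros} depend on $u_0$ alone, so nothing degenerates in the borderline case of no initial zeros; this is already built into the statement of the theorem, hence no extra care is needed.

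Accordingly, there is no real obstacle: the entire content of the corollary is contained in Theorem~\ref{t:zeros}, and the proof is one line. Should a self-contained argument be preferred in this special case, the natural route would be to run directly the comparison and energy estimates underlying Theorem~\ref{t:zeros} — propagating the pointwise lower bound $|u(t)|\ge c_0/2$ (say) from $|u_0|\ge c_0$ up to $t=T_\e$ using the parabolic equation~\eqref{eq:gl 1} together with the monotonicity of $E_\e$ — but this merely reproduces a simpler slice of that proof and is not worth spelling out separately.
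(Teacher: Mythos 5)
Your one-line deduction is correct and is essentially the paper's own argument: the corollary is presented there precisely as an immediate specialization of Theorem~\ref{t:zeros} to the case $\#\mathcal Z(0)=0$, with the paper merely noting in the proof of that theorem that the intermediate bound $|u|>0$ on $\lbrace |g|\geq C\e\rbrace$ already yields the statement when there are no initial zeros (since then $|g|$ is bounded below everywhere). Nothing further is needed.
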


This means that up to time $T_\eps$ the Ginzburg-Landau heat flow does not undergo a  Berezinsky-Kosterlitz-Thouless  phase transition. 
If one allows the initial condition $u_0$ to depend on $\e$, one may however observe creation of zeros, as in \cite[Proposition~4.1]{RS95}.

 Our second main result is a logarithmic energy bound at the time $t=T_\e$ given by Theorem~\ref{t:zeros}.
\begin{theorem}\label{t:energy}
For all sufficiently small $\eps >0$ (depending on $u_0$), we have
\begin{align*}
E_\e\left(u(t)\right)\lesssim \ln\frac{1}{\e},\qquad \forall t\geq T_\eps.
\end{align*}
\end{theorem}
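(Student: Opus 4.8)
The plan is to estimate the energy at time $T_\e$ by combining the pointwise lower bound $|u(T_\e,x)|\geq 1/2$ outside the "bad disks" of radius $\sim\e\sqrt{\ln(1/\e)}$ around the initial zeros (given by \eqref{eq:baddisksTe}) with a parabolic regularization estimate. The point is that $T_\e$ is chosen precisely so that, after running the flow for that long starting from a $C^1$ datum (hence with gradient bounded independently of $\e$, though with energy possibly as large as $\e^{-2}$), the solution has become smooth on the scale of $\e\sqrt{\ln(1/\e)}$, and the potential term has relaxed. I expect this to go in three steps.

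First I would establish the basic a priori bounds that propagate along the flow: a uniform $L^\infty$ bound $|u(t,x)|\leq \max(1,\|u_0\|_\infty)$ by the maximum principle applied to $|u|^2$, and then interior parabolic estimates for the rescaled equation. Writing $v(\tau,y)=u(\e^2\tau, \e y)$ on the rescaled torus $\e^{-1}\tor$, $v$ solves $\partial_\tau v-\Delta v=(1-|v|^2)v$ with bounded right-hand side, so standard parabolic regularity (Schauder / $L^p$ estimates on unit parabolic cylinders) gives $\|\nabla v(\tau)\|_\infty\lesssim 1$ for $\tau\gtrsim 1$, i.e. $\|\nabla u(t)\|_\infty\lesssim 1/\e$ for $t\gtrsim \e^2$ — this is the expected "$\e$-regularity" for the heat flow, now available for \emph{all} $t\geq \e^2$ regardless of energy, since the nonlinearity is globally bounded. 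One must be slightly careful that these constants depend only on $u_0$ (through $\|u_0\|_\infty$), not on $\e$.

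Second, I would split the energy integral at time $T_\e$ into the contribution of the bad disks $B_j:=B(z_j^0, C_1\e\sqrt{\ln(1/\e)})$ and of their complement. On the complement, $|u|\geq 1/2$, so the potential term is controlled by the gradient term via the standard trick $(1-|u|^2)^2 \lesssim \e^2 |\nabla |u||^2 + (\text{something})$; more simply I would argue that where $|u|\geq 1/2$ we can write $u=\rho e^{i\phi}$ locally and one still needs a bound — actually the cleanest route is: by the gradient bound $|\nabla u|\lesssim 1/\e$ and $|u|\leq C$, each bad disk $B_j$ contributes $\int_{B_j}|\nabla u|^2 \lesssim \e^{-2}\cdot |B_j| \lesssim \e^{-2}\cdot \e^2\ln(1/\e)=\ln(1/\e)$ and similarly $\int_{B_j}(1-|u|^2)^2/(4\e^2)\lesssim \e^{-2}|B_j|\lesssim \ln(1/\e)$. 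So the bad disks contribute $\lesssim \ln(1/\e)$, as there are finitely many of them (number equal to $\#\mathcal Z(0)$, independent of $\e$).

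Third, and this is where the sharp choice of $T_\e$ enters, I must bound the energy on the \emph{good} region $\{\dist(x,\mathcal Z(0))\geq C_1\e\sqrt{\ln(1/\e)}\}$ by $\lesssim\ln(1/\e)$. Here $|u|\geq 1/2$, so the gradient and potential are not automatically small; I would instead use a differential-inequality / energy-dissipation argument, or a comparison argument for $1-|u|^2$. One natural approach: set $w=1-|u|^2$; then $w$ satisfies $\partial_t w-\e^2\Delta w = -2\e^2|\nabla u|^2 + 2|u|^2 w$, and on the good set $w\le 3/4$, $|u|^2\ge 1/4$, so $w$ is driven toward small values; combined with the fact that from time $\sim \e^2\ln(1/\e) = o(T_\e)$ onward the solution is smooth on scale $\e$, one gets $\int_{\text{good}} w^2/\e^2 \lesssim \ln(1/\e)$. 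For the gradient part on the good set, the cleanest tool is the monotonicity/clearing-out machinery or simply the observation that once $|u|\ge 1/2$ everywhere on an annular good region, lifting $u=\rho e^{i\phi}$ and using that $\phi$ is harmonic-like with degree data bounded by $\#\mathcal Z(0)$ yields $\int |\nabla\phi|^2 \lesssim \#\mathcal Z(0)\cdot\ln(1/\e)$ plus bounded terms, and $\int|\nabla\rho|^2$ is controlled by the potential energy dissipated. The \textbf{main obstacle} is precisely this last gradient estimate on the good region: one needs to convert the pointwise lower bound $|u|\ge 1/2$ plus the regularity on scale $\e\sqrt{\ln(1/\e)}$ into a genuine $\ln(1/\e)$ bound on $\int|\nabla u|^2$, rather than the weaker $\e^{-2}$ one would get from brute force. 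I expect the proof to accomplish this by a Pohozaev-type or energy-decay estimate that exploits how much energy has been dissipated by time $T_\e$: since $E_\e$ is non-increasing and $\frac{d}{dt}E_\e(u(t)) = -\frac{2}{\e^2}\int|\partial_t u|^2$, integrating and using parabolic smoothing to bound $\int_{T_\e/2}^{T_\e}\int|\partial_t u|^2$ from below in terms of the "excess" energy above the logarithmic level should force the excess to have been dissipated by $T_\e$ — the logarithmic choice of $T_\e$ being exactly the time needed for an initially $O(\e^{-2})$ amount of excess to decay. Finally, Theorem~\ref{t:energy} for all $t\geq T_\e$ follows immediately from the monotonicity of $E_\e$ along the flow.
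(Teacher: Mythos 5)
Your proposal correctly handles the easy parts of the statement (the a priori bounds $|u|\lesssim 1$, $|\nabla u|\lesssim 1/\e$, the contribution of the finitely many bad disks of radius $\sim\e\sqrt{\ln(1/\e)}$, and the reduction to $t=T_\e$ by monotonicity of $E_\e$), but the heart of the theorem is exactly the step you flag as the ``main obstacle'', and none of the mechanisms you sketch for it actually closes the argument. On the good region $\lbrace \dist(x,\mathcal Z(0))\geq C\e\sqrt{\ln(1/\e)}\rbrace$ the bound $|u|\geq 1/2$ together with bounded total degree does \emph{not} give $\int|\nabla u|^2\lesssim\ln(1/\e)$: a degree bound controls only the winding of the phase, not its Dirichlet energy, and the estimate $\int|\nabla\phi|^2\lesssim \#\mathcal Z(0)\ln(1/\e)$ ``plus bounded terms'' presupposes a quantitative pointwise decay of the form $|\nabla u(x)|\lesssim 1/\dist(x,\mathcal Z(0))$, which is precisely what has to be proven. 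Likewise the dissipation heuristic has no mechanism behind it: energy monotonicity only gives $E_\e(u(T_\e))\leq E_\e(u_0)\sim\e^{-2}$, and to do better one would need a differential inequality forcing the ``excess'' to decay at rate $O(1)$ in $t$, which you do not derive (and which is not the right picture anyway: the initial excess is purely potential energy, while the gradient energy is $O(1)$ at $t=0$ and \emph{grows} to $O(\ln(1/\e))$ as the modulus sharpens around the zeros; nothing is being ``dissipated'' in the gradient part). Your comparison argument for $1-|u|^2$ also has a sign slip ($\partial_t w-\e^2\Delta w=+2\e^2|\nabla u|^2-2|u|^2w$ for $w=1-|u|^2$), and with only the crude bound $|\nabla u|\lesssim 1/\e$ the forcing term is $O(1)$, so this comparison yields $w\lesssim 1$ on the good set, which is useless for $\int w^2/\e^2\lesssim\ln(1/\e)$; to make it work you would again need the $1/D$ gradient decay, creating a circularity.

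The paper's proof supplies exactly the missing quantitative input by entirely different means: it never argues through degree or dissipation, but keeps tracking the explicit approximate solution $v=\Phi(t,g)$, $g=e^{\e^2t\Delta}u_0$, from Section~\ref{s:zeros}. The energy of $v$ at $t=T_\e$ is computed directly from \eqref{eq:Phi}, using $|g|\sim\dist(\cdot,\lbrace\hat z_j\rbrace)$, which gives $1-|v|^2\leq (1+|g|^2(e^{2t}-1))^{-1}$ and $|\nabla v|\lesssim e^t(1+|g|^2(e^{2t}-1))^{-1/2}$; integrating in the radial variable produces the logarithm, and the choice of $T_\e$ makes $\e^{-2}(e^{2T_\e}-1)^{-1}\lesssim\ln(1/\e)$ for the potential term. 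The difference $u-v=e^tw$ is then controlled in $H^1\cap L^2/\e$ by refining \eqref{eq:unifboundw}--\eqref{eq:boundgradw} away from the bad disks: one localizes the equation \eqref{eq:w} with cutoffs at dyadic distances $\lambda\gtrsim\e\sqrt{\ln(1/\e)}$, uses the damping term $-|v|^2\tilde w$ (which is strong once $e^t\gtrsim 1/\lambda$), and reapplies the Gronwall-type Lemma~\ref{l:aux} and Lemma~\ref{l:heatgrad} to get $|u-v|\lesssim\e^2e^t\sqrt{1+t}/D$ and the corresponding gradient bound, whose squares integrate to $\lesssim\e^2e^{2t}t\ln(1/\e)\lesssim\ln(1/\e)$ at $t=T_\e$. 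Without some substitute for these pointwise decay estimates in $D=\dist(\cdot,\mathcal Z(0))$, your outline does not prove the theorem.
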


Theorem~\ref{t:energy} shows that the evolution enters an energy regime where the analysis of \cite{BOS05-coll,BOS07-dyn,BOS07-quant} can be applied. 
The present context is actually slightly different, because we work on the torus $\mathbb T^2$ instead of $\R^2$, but the results of \cite{BOS05-coll,BOS07-dyn,BOS07-quant} should apply to $\mathbb T^2$, with appropriate modifications.
Reciprocally,
 the results of the present paper could be adapted to $\R^2$,
  with appropriate conditions at infinity, at the price of minor technical complications.

In particular, the work \cite{BOS07-quant} describes the evolution of the vortices of $u$ as functions of the accelerated time-variable
\begin{align*}
s=\frac{\e^2}{\ln\frac 1\e} \, t.
\end{align*}
The vortices $a_k(s)$ evolve according to the gradient flow of a renormalized energy $W(a)$, combined with a finite number of collision or branching times.
 Note that in the torus $\mathbb T^2$, the renormalized energy $W(a)$ would be slightly different than the one considered in \cite{BOS07-quant}. 
The initial conditions for the vortices $a_k(s)$ as $s\to 0^+$ are identified via the  jacobian $Ju=\det(\nabla u)$ at the initial time \cite[Proposition~2]{BOS08-survey}. 
We therefore complement Theorem~\ref{t:energy} with our third main result, which characterizes the jacobian  at time $t=T_\e$.

\begin{theorem}\label{t:jac}
We have, as $\e\to 0$,
\begin{align*}
J u(T_\e)=\det(\nabla u(T_\e))\longrightarrow \sum_{j=1}^N \hat d_j \delta_{z_j^0},
\end{align*}
in the sense of distributions, where $z_1^0,\ldots,z_N^0$ are the zeros of $u_0$, and \mbox{$\hat d_j\in\lbrace \pm 1\rbrace$} its topological degree at $z_j^0$.
\end{theorem}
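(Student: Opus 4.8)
The plan is to combine a static Jacobian estimate for Ginzburg--Landau maps carrying a logarithmic energy with the precise description of $u(T_\e)$ provided by Theorems~\ref{t:zeros} and~\ref{t:energy}. Write $u_\e:=u(T_\e,\cdot)$. By Theorem~\ref{t:energy}, $E_\e(u_\e)\lesssim\ln\frac1\e$; in particular $\int_{\tor}(1-|u_\e|^2)^2\lesssim\e^2\ln\frac1\e$ and $\|\nabla u_\e\|_{L^2(\tor)}\lesssim\sqrt{\ln\frac1\e}$, while $\|u_\e\|_{L^\infty(\tor)}\lesssim 1$ by the maximum principle for \eqref{eq:gl 1}. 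By Theorem~\ref{t:zeros}, $u_\e$ has exactly $N$ zeros $z_1(T_\e),\dots,z_N(T_\e)$, with $|z_j(T_\e)-z_j^0|\lesssim\e\sqrt{\ln\frac1\e}$ and topological degree $\hat d_j$ at $z_j(T_\e)$, and $|u_\e|\geq\tfrac12$ outside $\Omega_\e:=\bigcup_{j=1}^N\overline{B(z_j^0,r_\e)}$ with $r_\e\lesssim\e\sqrt{\ln\frac1\e}$. Since $z_1^0,\dots,z_N^0$ are fixed distinct points, for $\e$ small the $\overline{B(z_j^0,r_\e)}$ are pairwise disjoint and contained in prescribed fixed balls $B(z_j^0,\delta)$, also taken pairwise disjoint.

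The first step is concentration of the jacobian. Applying the Jacobian estimate for maps obeying a logarithmic energy bound (in the spirit of Jerrard and Soner, and underlying the vortex analysis of \cite{BOS05-coll,BOS07-dyn,BOS07-quant}), one obtains, for each small $\e$, finitely many points $a_k^\e\in\Omega_\e$ (where the bad set $\{|u_\e|\leq\tfrac12\}$ sits, by Theorem~\ref{t:zeros}) and integers $d_k^\e$, with the number of points and $\sum_k|d_k^\e|$ bounded independently of $\e$, such that $\det\nabla u_\e-\pi\sum_k d_k^\e\,\delta_{a_k^\e}\to0$ tested against Lipschitz functions; being local, this estimate transfers to $\tor$ unchanged. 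Since every $a_k^\e$ lies in $\Omega_\e$ and $\Omega_\e$ collapses onto $\{z_1^0,\dots,z_N^0\}$, each $a_k^\e$ converges to some $z_j^0$; as the $d_k^\e$ are uniformly bounded integers, along any subsequence of $\e\to0$ one extracts a further subsequence along which this combinatorial data stabilizes, and regrouping the points by their limit yields
\[
\det\nabla u_\e\ \longrightarrow\ \pi\sum_{j=1}^N D_j\,\delta_{z_j^0}\qquad\text{in }\mathcal D'(\tor)
\]
for some integers $D_1,\dots,D_N$.

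It remains to identify $D_j=\hat d_j$; once this is done the subsequential limit is always the same, so the whole family converges. Fix $\delta$ as above and a radial cutoff $\eta_j\in C_c^\infty(B(z_j^0,\delta))$, $\eta_j\equiv1$ on $B(z_j^0,\delta/2)$, $\eta_j(x)=\chi(|x-z_j^0|)$. On one hand the previous step gives $\int_{\tor}\eta_j\det\nabla u_\e\to\pi D_j$. On the other hand, using $\det\nabla u=\tfrac12\operatorname{curl}j(u)$ with $j(u)=u^1\nabla u^2-u^2\nabla u^1$, integrating by parts on $\tor$, and noting that $\nabla\eta_j$ is supported in the annulus $A_j:=B(z_j^0,\delta)\setminus\overline{B(z_j^0,\delta/2)}$,
\[
\int_{\tor}\eta_j\det\nabla u_\e=-\frac12\int_{\tor}\nabla^\perp\eta_j\cdot j(u_\e)=-\frac12\int_{A_j}\nabla^\perp\eta_j\cdot j(u_\e).
\]
For $\e$ small, $A_j\subset\{|u_\e|\geq\tfrac12\}$ and $A_j$ encloses only the zero $z_j(T_\e)$, of degree $\hat d_j$; writing $u_\e=\rho_\e e^{i\phi_\e}$ locally on $A_j$ (so $j(u_\e)=\rho_\e^2\nabla\phi_\e$) and passing to polar coordinates about $z_j^0$, the last integral equals $-\tfrac12\int_{\delta/2}^{\delta}\chi'(r)\bigl(\int_0^{2\pi}\rho_\e^2\,\partial_\theta\phi_\e\,d\theta\bigr)\,dr$. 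Here $\int_0^{2\pi}\partial_\theta\phi_\e\,d\theta=2\pi\hat d_j$ for every $r\in[\delta/2,\delta]$ (the winding number of $u_\e$ is constant on an annulus where $u_\e\neq0$, and equals $\hat d_j$ by the degree preservation of Theorem~\ref{t:zeros}), while by Cauchy--Schwarz the contribution of $\rho_\e^2-1$ is $\lesssim\|1-\rho_\e^2\|_{L^2(\tor)}\|\nabla u_\e\|_{L^2(\tor)}\lesssim\e\ln\frac1\e\to0$. Since $\int_{\delta/2}^\delta\chi'(r)\,dr=-1$, we get $\int_{\tor}\eta_j\det\nabla u_\e\to\pi\hat d_j$, hence $D_j=\hat d_j$ and the theorem follows; the resulting limit also matches the identification of the initial vortex data via the jacobian in \cite[Proposition~2]{BOS08-survey}.

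The main obstacle is the concentration step: one has to guarantee that \emph{all} of the jacobian mass localizes near the finitely many initial zeros, with nothing escaping into the bulk or around the two handles of the torus. This is precisely what the conjunction of the logarithmic energy bound (Theorem~\ref{t:energy}) and the sharp localization of the bad set in $\e\sqrt{\ln\frac1\e}$-disks (Theorem~\ref{t:zeros}) supplies, through the standard vortex-ball lower bounds; the only genuine chore is to record the Jacobian estimate in the periodic setting, the references being stated on bounded domains or on $\R^2$. The concluding degree identification is then a direct local computation, the sole subtlety being to absorb the error coming from $|u_\e|\neq1$, which the energy bound handles.
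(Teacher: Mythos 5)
Your argument is correct and follows the paper's skeleton (logarithmic energy bound at $t=T_\e$ $\Rightarrow$ concentration of the jacobian at finitely many atoms, then identification of atoms and degrees via Theorem~\ref{t:zeros}), but the two key steps are executed by genuinely different means. For the concentration step the paper uses only the soft compactness theorem of Jerrard--Soner, which yields a subsequential limit $\pi\sum_k\tilde d_k\delta_{a_k}$ with no a priori link to the bad set, and then rules out atoms outside $\lbrace z_j^0\rbrace$ by playing the lower bound $E_{\e_n}(\mathfrak u_{\e_n};B(a_k,\delta))\geq\pi|d_k|\ln(1/\e_n)+o(\ln(1/\e_n))$ against a local upper bound $E_\e(\mathfrak u_\e;B(a,\delta))\leq\frac\pi2\ln\frac1\e$ for $a\notin\lbrace z_j^0\rbrace$, which it has to derive from the refined pointwise estimates of Section~\ref{s:energy} (and where $C_0$ large is again needed). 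You avoid that energy computation entirely by invoking the quantitative vortex-ball Jacobian estimate together with \eqref{eq:baddisksTe}: balls of nonzero degree must contain zeros of $u(T_\e)$, hence cluster at $\lbrace z_j^0\rbrace$. Both routes require transplanting Jerrard--Soner-type results to $\tor$; yours needs the stronger quantitative statement with the atoms attached to balls covering $\lbrace |u_\e|\leq\frac12\rbrace$, while the paper's needs only compactness plus the lower bound but pays with the extra local upper bound. Two minor remarks: the centers $a_k^\e$ need not literally lie in $\Omega_\e$ (they are centers of balls \emph{covering} the bad set, so only within the ball radii of it), but this is immaterial since only nonzero-degree balls matter and those do contain zeros of $u_\e$; and your annulus computation identifying $D_j=\hat d_j$ through $j(u_\e)$ and Cauchy--Schwarz is in fact slightly more careful than the paper's, which asserts the exact identity $\frac1\pi\int_{B(z_j^0,r)}\det\nabla\mathfrak u_\e=\deg(\mathfrak u_\e,\partial B(z_j^0,r))$, strictly valid only up to an error vanishing with $\e$ (harmless, as these are integers; your estimate quantifies exactly that error). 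Finally, like the paper's own proof \eqref{eq:convJuTe}, you obtain the limit with the normalization $\pi\sum_j\hat d_j\delta_{z_j^0}$; the factor $\pi$ is omitted in the statement of the theorem, so your conclusion matches the proof in the text.
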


Now we can  be more specific about the initial conditions for the later evolution of the vortices $a_k(s)$, as described in \cite[Proposition~2]{BOS08-survey}. Letting $d_k$ denote the topological degree of $u(s)$ at $a_k(s)$ for small $s>0$, the initial conditions $a_k^0=\lim_{s\to 0^+}a_k(s)$ must satisfy
\begin{align*}
\sum_{k=1}^L d_k \delta_{a_k^0} =\sum_{j=1}^N \hat d_j \delta_{z_j^0}.
\end{align*} 
This implies in particular that $\lbrace a_k^0\rbrace =\lbrace z_j^0\rbrace$. But the points $a_k^0$ may not be disjoint: this description does not prevent \emph{a priori} a single initial zero $z_j^0$ to spontaneously split into several vortices $\lbrace a_k\rbrace$, because at $s=0$ the energy is not yet quantized (in the sense of \cite[Theorem~1.5]{BOS07-quant}). In fact initial splitting into two vortices can easily be ruled out, but it is not clear whether splitting into three or more vortices can occur.

However, note that in the setting of Corollary~\ref{c:nozero}, if there are no initial zeros, we can directly conclude that no later vortices appear. 
A complete proof of this fact would require adapting \cite{BOS08-survey} to our torus-based setting.

The main idea of this paper is that on the time scale considered, the effect of diffusion in the Ginzburg-Landau equation is dominated by the nonlinear effect. 
This means that the modulus of any initial data instantaneously (on the fast time scale $s=\eps^2 t/(\ln1/\eps)$) approaches  $1$ except possibly on small regions where the initial data is close to $0$. 
The methods are elementary and provide explicit pointwise estimates on $u(t,x)$, which directly imply the stated results.
To control diffusive effects, the key tool is Lemma~\ref{l:aux}, which is a type of Gronwall inequality (new to our best knowledge). 
The organization of the paper follows that of the presentation of the results which are proven in the same order in the consecutive sections.

\section{Zeros of $u$: proof of Theorem~\ref{t:zeros}}\label{s:zeros}

Denote by $\Phi\colon\R\times\R^2\to\R^2$ the flow of the ODE $y'=(1-|y|^2)y$, that is,
\begin{align*}
\partial_t\Phi=(1-|\Phi|^2)\Phi,\quad \Phi(0,X)=X,
\end{align*}
given explicitly by
\begin{align}\label{eq:Phi}
\Phi(t,X)=\frac{e^t X}{\sqrt{1+|X|^2(e^{2t}-1)}}.
\end{align}
We want to estimate how far $u$ is from 
\begin{align*}
v(t,x)=\Phi(t,g(t,x))
\end{align*} 
for some well-chosen map $g$ with $g(0,x)=u_0(x)$.
To this end we define $w=e^{-t}(u-v)$, so that
\begin{align*}
u=v +e^t w.
\end{align*}
Using the equations satisfied by $u$ and $\Phi$ we obtain
\begin{align}\label{eq:w}
\partial_t w -\e^2\Delta w &= -2(v\cdot w) v - |v|^2 w -e^{-t}\mathcal N(v,e^t w) -e^{-t}\mathcal R,\\
\mathcal N(v,X)&=|X|^2v +2 (v\cdot X) X +|X|^2 X,\nonumber\\
\mathcal R&=\partial_t v -\e^2\Delta v -(1-|v|^2)v\nonumber\\
&=D\Phi(t,g)(\partial_t g -\e^2\Delta g) -\e^2 D^2\Phi(t,g)\nabla g\cdot \nabla g\nonumber
\end{align}
In view of \eqref{eq:w}, it is natural to choose, as in \cite{chen04},
$g(t)=e^{\e^2t\Delta}u_0$, that is, $g$ solves
\begin{align}\label{eq:g}
\partial_t g -\e^2\Delta g=0,\quad g(0,x)=u_0(x),
\end{align}
and therefore
\begin{align}\label{eq:R}
\mathcal R&= -\e^2 D^2\Phi(t,g)\nabla g\cdot \nabla g.
\end{align}
The rest of the article is devoted to obtaining good pointwise estimates on $e^tw=u-v$.

\begin{lemma}\label{l:maxpple}
If $w$ solves
\begin{align*}
\partial_t w -\e^2 \Delta w = -2(v\cdot w) v - |v|^2 w +F,\qquad t>0,\; x\in\Omega,
\end{align*}
with $w(0,x)=0$, then
\begin{align*}
\|w(t)\|_{L^\infty}\leq \int_0^t \|F(s)\|_{L^\infty}\, ds.
\end{align*}
\end{lemma}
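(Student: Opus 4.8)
The plan is to treat the lemma as a comparison principle whose engine is the \emph{dissipativity} of the zeroth-order term: for every fixed $v$, the linear map $w\mapsto 2(v\cdot w)v+|v|^2w$ is symmetric and nonnegative, since $\big(2(v\cdot w)v+|v|^2w\big)\cdot w=2(v\cdot w)^2+|v|^2|w|^2\ge 0$. I would therefore work with $h:=|w|^2$ and bound it by a spatially constant supersolution. By standard parabolic regularity $w$ is smooth for $t>0$ and continuous up to $t=0$, hence so is $h$; differentiating and inserting the equation for $w$ gives, pointwise on $(0,\infty)\times\tor$,
\[
\partial_t h-\e^2\Delta h=-4(v\cdot w)^2-2|v|^2h-2\e^2|\nabla w|^2+2\,w\cdot F\le 2\sqrt{h}\,\|F(t)\|_{L^\infty},
\]
where the first three terms on the right are discarded by sign and $w\cdot F\le|w|\,\|F\|_{L^\infty}=\sqrt{h}\,\|F\|_{L^\infty}$.

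Now $\phi(t):=\int_0^t\|F(s)\|_{L^\infty}\,ds$ satisfies $(\phi^2)'=2\phi\,\|F(t)\|_{L^\infty}=2\sqrt{\phi^2}\,\|F(t)\|_{L^\infty}$ with $\phi(0)=0$, so $\phi^2$ is ``morally'' the supersolution we want; the obstruction is that $p\mapsto 2\sqrt{p}\,\|F\|$ fails to be Lipschitz at $p=0$, so one cannot simply invoke a comparison theorem. I would remedy this in the standard way: for $\delta>0$ set $\phi_\delta(t):=\delta(1+t)+\int_0^t\|F(s)\|_{L^\infty}\,ds$, which is positive and satisfies $\phi_\delta'=\delta+\|F\|_{L^\infty}>\|F\|_{L^\infty}$. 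Fixing $T>0$, I claim $h<\phi_\delta^2$ on $[0,T]\times\tor$. This is true for small $t$ since $h(0,\cdot)=0<\delta^2=\phi_\delta(0)^2$; if it failed, continuity and compactness of $\tor$ would furnish a first time $t_0\in(0,T]$ and a point $x_0$ with $h(t_0,x_0)=\phi_\delta(t_0)^2$, at which $x_0$ is a spatial maximum of $h(t_0,\cdot)$. Then $\Delta h(t_0,x_0)\le 0$ and, since $t_0$ is the first contact time, the one-sided time derivative satisfies $\partial_t h(t_0,x_0)\ge 2\phi_\delta(t_0)\phi_\delta'(t_0)$, whereas the displayed differential inequality at $(t_0,x_0)$ — where $\sqrt{h}=\phi_\delta(t_0)$ — gives $\partial_t h(t_0,x_0)\le 2\phi_\delta(t_0)\|F(t_0)\|_{L^\infty}$. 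Hence $\phi_\delta'(t_0)\le\|F(t_0)\|_{L^\infty}$, i.e. $\delta\le 0$, a contradiction. Therefore $|w(t,x)|\le\phi_\delta(t)$ on $[0,T]\times\tor$, and letting $\delta\to 0$ and $T$ be arbitrary yields $\|w(t)\|_{L^\infty}\le\int_0^t\|F(s)\|_{L^\infty}\,ds$.

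I do not expect any serious difficulty; the only points requiring care are (i) the perturbation $\delta(1+t)$, which simultaneously keeps the barrier away from $0$ — where $\sqrt{\,\cdot\,}$ is non-Lipschitz — and upgrades the borderline inequality to a strict one, and (ii) the routine justification that on the compact torus spatial maxima are attained and a ``first contact time'' is well defined, so that the signs of $\Delta h(t_0,x_0)$ and of the one-sided time derivative are as stated (if $\|F(\cdot)\|_{L^\infty}$ is only assumed integrable rather than continuous, one applies the argument to a regularization of $F$ and passes to the limit). As an alternative route one could observe that the same computation, applied to the homogeneous equation $\partial_t\tilde w-\e^2\Delta\tilde w=-2(v\cdot\tilde w)v-|v|^2\tilde w$, shows that its solution operator is an $L^\infty$-contraction, and then conclude via Duhamel's formula; but the direct maximum-principle argument above is more self-contained.
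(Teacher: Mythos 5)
Your proof is correct, but it runs along a genuinely different track than the paper's. The paper multiplies the equation by $w/|w|$ to get the Kato-type inequality $\partial_t|w|\leq \e^2\Delta|w|+|F|$, then compares $|w|$ with the solution $\rho(t)=\int_0^t e^{\e^2(t-s)\Delta}|F(s)|\,ds$ of the forced heat equation and concludes from the fact that the heat semigroup does not increase the $L^\infty$ norm. You instead square, working with $h=|w|^2$, discard the nonnegative dissipative terms $4(v\cdot w)^2+2|v|^2h+2\e^2|\nabla w|^2$, and run a hands-on first-contact argument against the spatially constant barrier $\phi_\delta(t)^2$ with the $\delta(1+t)$ perturbation. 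Both proofs hinge on the same structural fact (the zeroth-order term $2(v\cdot w)v+|v|^2w$ pairs nonnegatively with $w$), and both computations are sound; yours has the merit of being fully self-contained (no Duhamel formula or semigroup contraction) and of sidestepping the formal issue of dividing by $|w|$ at zeros of $w$, at the cost of being longer and of needing the regularization remark when $t\mapsto\|F(t)\|_{L^\infty}$ is merely integrable (in the paper's application it is continuous, so this is harmless). Your closing remark about the $L^\infty$-contraction of the homogeneous solution operator plus Duhamel is in fact the closest in spirit to the paper's own argument.
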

\begin{proof}[Proof of Lemma~\ref{l:maxpple}]
Multiplying the equation by $w/|w|$ we obtain
\begin{align*}
\partial_t |w| &=\e^2 \frac{w}{|w|}\cdot\Delta w -|v|^2|w|-2\frac{(v\cdot w)^2}{|w|}+F\cdot\frac{w}{|w|}\\
&\leq \e^2 \frac{w}{|w|}\cdot\Delta w +|F| \\
&\leq \e^2 \Delta |w| +|F|,
\end{align*}
so by comparison principle we have $|w|\leq \rho$ where $\rho$ solves $\partial_t\rho-\e^2\Delta\rho=|F|$ and $\rho(0,x)=0$, that is, $\rho(t)=\int_0^t e^{\e^2(t-s)\Delta}|F(s)|\, ds$, where $e^{t\Delta}$ denotes the heat semigroup on the torus $\tor$. Since the $L^\infty$-norm is nonincreasing under the action of that semigroup, we deduce the announced bound.
\end{proof}

We apply Lemma~\ref{l:maxpple} to our map $w$ and $F=-e^{-t}\mathcal N(v,e^tw) -e^{-t}\mathcal R$. 
We have $|g|\leq |u_0|\lesssim 1$,
so $|v|\lesssim 1$ and $|\mathcal N(v,X)|\lesssim |X|^2$. 
Thus we obtain
\begin{align}\label{eq:boundwNR}
\|e^t w(t)\|_{L^\infty}\lesssim \int_0^t e^{(t-s)}\|e^s w(s)\|^2_{L^\infty}\, ds +\int_0^t e^{t-s}\|\mathcal R(s)\|_{L^\infty}\, ds
\end{align}
Recall 
\begin{align*}
\mathcal R&=-\e^2 D^2\Phi(t,g)\nabla g\cdot \nabla g,
\end{align*}
and $|\nabla g|\leq |\nabla u_0|\lesssim 1$, hence
\begin{align*}
\|\mathcal R(t)\|_\infty\lesssim \e^2  \sup_{|X|\lesssim 1}|D^2\Phi(t,X)|.
\end{align*}
Direct calculation gives
\begin{align*}
|D^2\Phi(t,X)|&\lesssim \frac{e^t|X|(e^{2t}-1)}{(1+|X|^2(e^{2t}-1))^{3/2}} \\
&=e^t (e^{2t}-1)^{1/2}\frac{(|X|^2(e^{2t}-1))^{1/2}}{(1+|X|^2(e^{2t}-1))^{3/2}}\\
&\lesssim e^t (e^{2t}-1)^{1/2},
\end{align*}
so
\begin{align*}
\int_0^t e^{t-s}\|\mathcal R(s)\|_{L^\infty}\, ds &\lesssim
\e^2  e^t\int_0^t(e^{2s}-1)^{1/2}\, ds\\
&\lesssim \e^2 e^t (e^{2t}-1)^{1/2},
\end{align*}
where we have used
\begin{align*}
\int_0^t(e^{2s}-1)^{1/2}\, ds & =\int_0^{(e^{2t}-1)^{1/2}}\frac {x^2}{1+x^2}\, dx \\
& =(e^{2t}-1)^{1/2}-\arctan((e^{2t}-1)^{1/2}) \\
& \leq (e^{2t}-1)^{1/2}.
\end{align*}
Plugging this into \eqref{eq:boundwNR} we deduce
\begin{align}\label{eq:boundwimplicit}
\|e^t w(t)\|_{L^\infty}\lesssim \int_0^t e^{(t-s)}\|e^s w(s)\|^2_{L^\infty}\, ds +\e^2  e^t (e^{2t}-1)^{1/2}.
\end{align}

\begin{lemma}\label{l:aux}
Assume $f,h$ are continuous positive functions on $(0,\infty)$ satisfying 
\begin{align*}
&\limsup_{t\searrow 0}\frac{f(t)}{h(t)}\leq 1,\\
\text{and }&
f(t)\leq c\int_0^t e^{t-s}f(s)^2\, ds +h(t)\qquad\forall t>0,
\end{align*}
for some constant $c>0$. If $T>0$ is such that
\begin{align*}
\sup_{0<t<T} \int_0^t e^{t-s}\frac{h(s)}{h(t)}h(s)\, ds\leq \frac{1}{8c},
\end{align*}
then
\begin{align*}
f(t)\leq 2h(t)\qquad\forall t\in (0,T).
\end{align*}
If in addition $h$ is nondecreasing it suffices to check that
\begin{align*}
\int_0^T e^{T-s} h(s)\, ds \leq \frac{1}{8c}.
\end{align*}
\end{lemma}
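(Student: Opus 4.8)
The plan is a continuity (bootstrap) argument. Set
\[
T^\ast=\sup\Bigl\{\tau\in(0,T]\colon f(t)\le 2h(t)\ \text{for all }t\in(0,\tau)\Bigr\}.
\]
First I would check that $T^\ast>0$. The hypothesis $\limsup_{t\searrow 0}f(t)/h(t)\le 1$ produces some $t_0>0$ with $f(t)<\tfrac32h(t)$ on $(0,t_0)$, and since $(0,t_0)\subseteq\{f<2h\}$ we get $T^\ast\ge\min(t_0,T)>0$. (Along the way one also sees that $\int_0^t e^{t-s}f(s)^2\,ds$ is finite for every $t$: near $0$ it is dominated by $\int_0^t e^{t-s}h(s)^2\,ds<\infty$, and on compact subintervals of $(0,\infty)$ the continuous function $f$ is bounded.)

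Next I would show $T^\ast=T$ by contradiction. Assume $T^\ast<T$. For $t\in(0,T^\ast)$ one has $f(s)\le 2h(s)$ on $(0,t)$, hence $f(s)^2\le 4h(s)^2$, and the assumed integral inequality gives
\[
f(t)\le 4c\int_0^t e^{t-s}h(s)^2\,ds+h(t)
=4c\,h(t)\int_0^t e^{t-s}\frac{h(s)}{h(t)}h(s)\,ds+h(t)
\le 4c\,h(t)\cdot\frac{1}{8c}+h(t)=\tfrac32\,h(t),
\]
where the smallness hypothesis was used legitimately since $t<T^\ast<T$. By continuity of $f$ and $h$ this passes to $t=T^\ast$, so $f(T^\ast)\le\tfrac32h(T^\ast)<2h(T^\ast)$; continuity then yields $f<2h$ on a neighbourhood of $T^\ast$, contradicting the maximality of $T^\ast$. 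Therefore $T^\ast=T$, which by definition of $T^\ast$ means $f\le 2h$ on all of $(0,T)$ (and re-running the estimate even improves this to $f\le\tfrac32h$ there).

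Finally, for the addendum, if $h$ is nondecreasing then $h(s)/h(t)\le 1$ whenever $0<s\le t$, so $\int_0^t e^{t-s}\frac{h(s)}{h(t)}h(s)\,ds\le\int_0^t e^{t-s}h(s)\,ds$. Writing the right-hand side as $e^t\int_0^t e^{-s}h(s)\,ds$, its $t$-derivative is $e^t\int_0^t e^{-s}h(s)\,ds+h(t)>0$, so $t\mapsto\int_0^t e^{t-s}h(s)\,ds$ is increasing and its supremum over $(0,T)$ equals $\int_0^T e^{T-s}h(s)\,ds$. Hence $\int_0^T e^{T-s}h(s)\,ds\le\frac1{8c}$ implies the smallness hypothesis of the first part, and the conclusion follows from it. I do not expect a genuine obstacle: the whole mechanism is the gap between the bootstrap constant $2$ and the improved constant $\tfrac32$ coming from the smallness hypothesis, combined with continuity. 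The only points needing a little care are the behaviour as $t\searrow 0$ (so that $T^\ast>0$ and the integrals make sense, for which the $\limsup$ hypothesis is exactly tailored) and checking that the smallness hypothesis is invoked only for times $t<T$.
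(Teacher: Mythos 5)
Your proof is correct and is essentially the paper's argument in different packaging: both rely on the $\limsup$ hypothesis to start a continuity argument and on the gap that the smallness constant $\frac{1}{8c}$ creates between the assumed bound and the improved one. The paper phrases this by showing the running supremum $F(t)=\sup_{0<s<t}f(s)/h(s)$ satisfies $F\le\tfrac18 F^2+1$ and therefore, being continuous with $F(0^+)\le 1$, stays below $4-2\sqrt2\le 2$, whereas you bootstrap $f\le 2h$ directly into the improved bound $f\le\tfrac32 h$ and extend past the maximal time; the underlying mechanism is the same.
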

\begin{proof}[Proof of Lemma~\ref{l:aux}]
For all $t>0$ we have
\begin{align*}
\frac{f(t)}{h(t)}&\leq c\int_0^t e^{t-s}\left(\frac{f(s)}{h(s)}\right)^2 \frac{h(s)}{h(t)}h(s)\, ds +1\\
&\leq c \int_0^t e^{t-s}\frac{h(s)}{h(t)}h(s)\, ds\, F(t)^2 +1,
\end{align*}
where
\begin{align*}
F(t)=\sup_{0<s<t}\frac{f(s)}{h(s)}.
\end{align*}
Since $F$ is nondecreasing and using that the integral in the right-hand side is bounded by $1/8c$ for $0<t<T$ we deduce
\begin{align*}
F(t)\leq \frac 18 F(t)^2 +1\qquad\forall t\in (0,T),
\end{align*}
so $F$ takes values into
\begin{align*}
\left\lbrace x\in \R\colon \frac {x^2}{8}-x+1 \geq 0\right\rbrace =
(-\infty,4-2\sqrt 2]
\cup [4+2\sqrt 2,+\infty).
\end{align*}
Since $F$ is continuous on $(0,T)$ and $F(0^+)\leq 1 < 4-2\sqrt 2$ we deduce that $F(t)\leq 4-2\sqrt 2\leq 2$ for all $t\in (0,T)$.
\end{proof}

We apply Lemma~\ref{l:aux} to 
\begin{align*}
f(t)=\|e^t w(t)\|_{L^\infty},\quad h(t)=A\e^2 e^t (e^{2t}-1)^{1/2},
\end{align*}
where $A\geq 1$ is the   constant hidden in the sign $\lesssim$ in \eqref{eq:boundwimplicit}.
By Lemma \ref{l:maxpple} the map $w$ satisfies $\|e^t w(t)\|_{L^\infty}\lesssim t$
so $\limsup_{0^+}(f/h)=0$, and thanks to \eqref{eq:boundwimplicit} we deduce that
\begin{align}\label{eq:unifboundw}
&\|e^t w(t)\|_{L^\infty}\lesssim \e^2   
e^t (e^{2t}-1)^{1/2}
\\
&\text{for }
0\leq t\leq T=\ln\frac{1}{\e}- \ln(16A^2),\nonumber
\end{align}
since $h$ is nondecreasing and for this value of $T$ we have
\begin{align*}
8A\int_0^T e^{T-s} h(s)\, ds & \leq 8A^2\e^2  e^{2T}\leq  \frac 12.
\end{align*}
Estimate \eqref{eq:unifboundw} tells us that $u$ is close to $v$. Note in particular that \eqref{eq:unifboundw}  is valid up to $t=\ln\frac{1}{\eps}-\frac{1}{2}\ln\ln\frac{1}{\eps}$ 
 if $\e$ is small enough.
From \eqref{eq:unifboundw} we also deduce a bound on $\nabla w$, using again the equation \eqref{eq:w}.
\begin{lemma}\label{l:heatgrad}
If $w$ solves $\partial_t w-\e^2\Delta w=F$ with $w(0)=0$ in the torus $\tor$, then we have
\begin{align*}
\|\nabla w(t)\|_{L^\infty}\lesssim \frac{1}{\e}\int_0^t \frac{\|F(s)\|_{L^\infty}}{\sqrt{t-s}}\,ds.
\end{align*}
\end{lemma}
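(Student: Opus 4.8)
The plan is to prove the gradient estimate for the heat semigroup on the torus by Duhamel's formula combined with the standard $L^1$--$L^\infty$ bound on the gradient of the heat kernel, with careful tracking of the $\e^2$ in front of the Laplacian. First I would write the solution as $w(t)=\int_0^t e^{\e^2(t-s)\Delta}F(s)\,ds$, where $e^{\tau\Delta}$ is the heat semigroup on $\tor$, so that by interchanging derivative and integral
\begin{align*}
\nabla w(t)=\int_0^t \nabla e^{\e^2(t-s)\Delta}F(s)\,ds.
\end{align*}

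Next I would recall the convolution structure: on the torus, $e^{\tau\Delta}$ acts by convolution (periodized) with the Euclidean heat kernel $G_\tau(x)=(4\pi\tau)^{-1}e^{-|x|^2/4\tau}$, so $\nabla e^{\tau\Delta}F=(\nabla G_\tau)*F$ and therefore
\begin{align*}
\|\nabla e^{\tau\Delta}F\|_{L^\infty}\leq \|\nabla G_\tau\|_{L^1}\,\|F\|_{L^\infty}\lesssim \tau^{-1/2}\|F\|_{L^\infty},
\end{align*}
since $\|\nabla G_\tau\|_{L^1(\R^2)}\lesssim \tau^{-1/2}$ by scaling (and the periodized kernel has comparable $L^1$ norm for $\tau\lesssim 1$, which is the only regime used). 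Applying this with $\tau=\e^2(t-s)$ gives $\|\nabla e^{\e^2(t-s)\Delta}F(s)\|_{L^\infty}\lesssim (\e^2(t-s))^{-1/2}\|F(s)\|_{L^\infty}=\e^{-1}(t-s)^{-1/2}\|F(s)\|_{L^\infty}$, and integrating in $s$ yields exactly the claimed bound.

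The only genuinely delicate point is the comparison between the torus heat kernel and the Euclidean one: one must justify that $\|\nabla(\text{periodized }G_\tau)\|_{L^1(\tor)}\lesssim \tau^{-1/2}$ uniformly, rather than just for fixed $\tau$. For small $\tau$ (the relevant range, since $\e^2(t-s)\le \e^2 T\lesssim 1$) the periodization is a harmless perturbation and the Euclidean scaling bound transfers; for $\tau$ bounded below the gradient kernel is simply bounded, so the estimate is even easier. Alternatively, one can bypass the kernel entirely and argue by Fourier series on $\tor$: $\|\nabla e^{\e^2\tau\Delta}\|_{L^2\to L^2}\lesssim (\e^2\tau)^{-1/2}$ is immediate from $\sup_k |k|e^{-\e^2\tau|k|^2}\lesssim(\e^2\tau)^{-1/2}$, and upgrading $L^2$ to $L^\infty$ is done by iterating the semigroup and using hypercontractivity, though the convolution argument is cleaner. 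I would present the convolution proof as the main line, since it is the most transparent and the constants depend only on the torus (hence only on universal data, consistent with the paper's convention for $\lesssim$).
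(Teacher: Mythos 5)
Your proof is correct and follows essentially the same route as the paper: Duhamel's formula, Young's inequality $L^1 * L^\infty \to L^\infty$, and the scaling bound $\|\nabla G_\tau\|_{L^1}\lesssim \tau^{-1/2}$ with $\tau=\e^2(t-s)$. The periodization issue you flag is sidestepped in the paper (and could be in your argument too) by simply viewing $w$ and $F$ as periodic maps on $\R^2$ and convolving with the Euclidean kernel on all of $\R^2$, so the exact scaling of $\|\nabla H_\delta\|_{L^1(\R^2)}$ applies for every $\tau$ with no kernel comparison needed.
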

\begin{proof}[Proof of Lemma~\ref{l:heatgrad}]
We can consider $w$ and $F$ as periodic maps defined on $\R^2$, then $w$ is given by the  Duhamel formula
\begin{align*}
w(t)=\int_0^t H_{\e\sqrt{t-s}} * F(s) \, ds,
\end{align*}
where the convolution is on $\R^2$ and $H_\delta(x)=\delta^{-2}H(x/\delta)$,  $H(x)=(4\pi)^{-1}e^{-|x|^2/4}$. Therefore we have
\begin{align*}
\|\nabla w(t)\|_{L^\infty}&\lesssim \int_0^t \|\nabla H_{\e\sqrt{t-s}}\|_{L^1}\|F(s)\|_{L^\infty} \, ds,
\end{align*}
and the estimate follows from
\begin{align*}
\|\nabla H_{\e\sqrt{t-s}}\|_{L^1} \lesssim\frac{1}{\e\sqrt{t-s}}\|\nabla H\|_{L^1}.
\end{align*}
\end{proof}
Applying Lemma~\ref{l:heatgrad} to the equation \eqref{eq:w} satisfied by $w$ and using \eqref{eq:unifboundw} to estimate the right-hand side we obtain
\begin{align}\label{eq:boundgradw}
\|\nabla w\|_{L^\infty}&\lesssim  \e  \sqrt t (e^{2t}-1)^{1/2},
\end{align} 
for all $t\leq T=\ln\frac{1}{\e}- \ln(16A^2)$.

All assertions of
Theorem 1 will follow from the bounds \eqref{eq:unifboundw}-\eqref{eq:boundgradw} on $e^{t}w=u-v$ and the explicit expression of $v=\Phi(t,g)$. First we need to gather some information on $g=e^{\e^2t\Delta}u_0$. To that end we use the nondegeneracy assumption \eqref{eq:u0nondegen}.
It implies that $u_0$
 has a finite number of zeroes, all of degree $\pm 1$. We denote
\begin{align*}
\lbrace u_0=0\rbrace =\lbrace z_1^0,\ldots,z_N^0\rbrace.
\end{align*}
Since $g(t,x)=\tilde g(\e^2 t,x)$ where $\tilde g(\tilde t)=e^{\tilde t\Delta}u_0$ is $C^1$ in $[0,\infty)\times\tor$, we deduce that there exists $t_0,\beta_0,r_0>0$ such that, for all $t\leq t_0/\e^2$,
\begin{align*}
&|g(t)|+|\det(\nabla g(t))|\geq\frac{\alpha_0}{2},\\
& |g(t,x)|\geq\beta_0\qquad\text{for  }\dist(x,\lbrace z_j^0\rbrace)\geq r_0,\\
& g(t)\text{ is invertible and }|\nabla g(t)^{-1}|\lesssim 1\text{ on }B(z_j^0,r_0).
\end{align*}
In each disk $B(z_j^0,r_0)$, the map $g(t)$ has exactly one zero $\hat z_j(t)$,
so
\begin{align*}
\lbrace g(t)=0\rbrace =\left\lbrace \hat z_1(t),\ldots,\hat z_N(t)\right\rbrace,
\end{align*}
 and we have
\begin{align}\label{eq:modulusg}
\dist(\cdot,\lbrace \hat z_j(t)\rbrace) \lesssim |g(t)|  \lesssim \dist(\cdot,\lbrace \hat z_j(t)\rbrace) 
\end{align}
Thanks to the implicit function theorem, the maps $t\mapsto \hat z_j(t)$ are $C^1$, and
\begin{align*}
\frac{d}{dt}\hat z_j(t)=-\nabla g(t,\hat z_j)^{-1}\partial_t g(t,\hat z_j),
\end{align*}
hence
\begin{align*}
|\frac{d}{dt}\hat z_j|&\lesssim   \|\partial_t g\|_\infty
\lesssim \e^2   \|\Delta g\|_{L^\infty}.
\end{align*}
Viewing $g$ and $u_0$ as periodic maps defined on $\R^2$, $g$ is given by the formula
\begin{align*}
g(t)=H_{\e\sqrt t}*u_0,
\end{align*}
where the convolution is on $\R^2$ and $H_\delta(x)=\delta^{-2}H(x/\delta)$,  $H(x)=(4\pi)^{-1}e^{-|x|^2/4}$, so
\begin{align*}
\|\Delta g\|_{L^\infty}&\leq \|\nabla H_{\e\sqrt t}\|_{L^1}\|\nabla u_0\|_{L^\infty}
\lesssim \frac{1}{\e\sqrt t},
\end{align*}
and we infer
\begin{align}\label{eq:estimzj}
|\frac{d}{dt}\hat z_j|
&\lesssim \frac{\e}{\sqrt t},
\qquad
|z_j(t)-z_j^0|\lesssim \e\,\sqrt t.
\end{align}
Next we combine these properties of $g(t)$ with the explicit expression \mbox{$v=\Phi(t,g)$} and the bounds \eqref{eq:unifboundw}-\eqref{eq:boundgradw} on $e^tw=u-v$ to obtain the desired properties on $u$. We denote by $C>0$ a generic constant depending on $u_0$ and which may change from line to line. We start by bounding the modulus $|u|$ from below: using \eqref{eq:Phi} and \eqref{eq:unifboundw} we obtain, for $0\leq t\leq \ln(1/\e)-C$,
\begin{align*}
|u|
&\geq |v|-e^t|w| 
\geq \frac{e^t|g|}{\sqrt{1+|g|^2(e^{2t}-1)}}-C\e^2 e^{2t}
\\
&\geq \frac 12 \min(e^t|g|,1)-C\e^2e^{2t}.
\end{align*}
The last quantity is positive whenever $e^t|g|\geq 1$ and $e^{2t}<1/(2C\e^2)$, or $e^t|g|\leq 1$ and $|g|^2>2C\e^2$. Hence we deduce that
\begin{align*}
|u|>0\quad\text{ in }\left\lbrace |g|\geq C \e  \right\rbrace\quad\text{for }0\leq t\leq \ln\frac 1\e -C.
\end{align*}
In the case without initial zeros, this proves in particular Corollary~\ref{c:nozero}.
Moreover, combining this with \eqref{eq:modulusg} we have $|u(t)|>0$ outside 
the disks $B(\hat z_j(t),C\e)$. 
By homotopy invariance of the topological degree, $u(t)$ must have at least one zero  $z_j(t)\in B(\hat z_j(t),C\e)$.
Next we verify that this zero is unique.

Recall that $g(t)$ is invertible on $B(z_0,r)$, and maps $B(z_j(t),C\e)$ into $B(0,K\e)$, for some constant $K$ depending on $u_0$, thanks to \eqref{eq:modulusg}. The flow map $\Phi(t)=\Phi(t,\cdot)$ is invertible from $B(0,K\e)$ onto $B(0,R)$ given by $R=\Phi(t,K\e)$, with inverse $\Phi(t)^{-1}=\Phi(-t)$. Therefore $v(t)=\Phi(t)\circ g(t)$ is invertible on $B(z_j(t),C\e)$, and
\begin{align*}
\sup_{v(B(\hat z_j(t),C\e))}|\nabla v(t)^{-1}|\lesssim \sup_{|X|\leq \Phi(t,K\e)} |\nabla\Phi(-t,X)|.
\end{align*}
For $t\leq \ln\frac 1\e -C$ we have
\begin{align*}
\Phi(t,K\e)&
=\frac{e^tK\e}{\sqrt{1+K^2\e^2(e^{2t}-1)}}\leq e^tK\e\leq  \frac 12,
\end{align*}
provided $C$ is large enough, and, for $|X|\leq 1/2$,
\begin{align*}
\nabla\Phi(-t,X)
&=\frac{e^{-t}}{\sqrt{1-|X|^2(1-e^{-2t})}}
\left( I+ \frac{1-e^{-2t}}{1-|X|^2(1-e^{-2t})}X\otimes X\right),
\end{align*}
so we infer
\begin{align*}
\sup_{v(B(\hat z_j(t),C\e))}|\nabla v(t)^{-1}| \lesssim e^{-t}.
\end{align*}
We use this to show that $u(t)$ is invertible on $B(\hat z_j(t),C\e)$. Since the equation $y=u(x)=v(x)+e^tw(x)$ is equivalent to $x=v^{-1}(y-e^tw(x))$, it suffices to check that the map $F\colon x\mapsto v^{-1}(y-e^tw(x))$ is a contraction on $B(\hat z_j(t),C\e)$, for $|y|< \delta$. Here $\delta>0$ is a small constant such that $v^{-1}$ is well defined on $B(0,2\delta)$. Thanks to \eqref{eq:unifboundw} we have $|e^tw|\leq \delta$ provided $C$ is large enough, and 
\begin{align*}
\sup_{B(\hat z_j(t),C\e)}|\nabla F| \lesssim e^{-t}\|e^t \nabla w\|_\infty \lesssim \|\nabla w\|_\infty.
\end{align*}
Since $\|\nabla w\|_\infty\lesssim \e\sqrt{t}e^t$ thanks to \eqref{eq:boundgradw}, we 
deduce that $F$ is a contraction for
\begin{align*}
0\leq t\leq T_\e =\ln\frac 1\e -\frac 12\ln\ln\frac 1\e -C_0,
\end{align*}
if the constant $C_0$ is large enough, depending on $u_0$. By the above discussion this shows that $u(t)$ is invertible on $B(\hat z_j(t),C\e)$, and 
\begin{align*}
\left\lbrace u(t)=0\right\rbrace =\left\lbrace  z_1(t),\ldots, z_N(t)\right\rbrace,
\end{align*}
for some $z_j(t)\in B(\hat z_j(t),C\e)$. 
This proves Theorem~\ref{t:zeros}, except for its last assertion \eqref{eq:baddisksTe}. 
To verify \eqref{eq:baddisksTe}, we note that \eqref{eq:unifboundw} ensures $|u-v|=|e^t w|\leq 1/4$ for $t=T_\e$, so it suffices to check that $|v(T_\e,x)|\geq 3/4$ for $\dist(x,\lbrace z_j^0\rbrace )\gtrsim \e\sqrt{\ln(1/\e)}$. 
We have
\begin{align*}
|v(T_\e)|&=\frac{e^{T_\e}|g|}{\sqrt{1+|g|^2(e^{2T_\e}-1)}}
= \frac{1}{\sqrt{1+(1-|g|^2)e^{-2T_\e}|g|^{-2} }}\\
&=\frac{1}{\sqrt{1+(1-|g|^2)e^{C_0}\left(\frac{\e\sqrt{\ln (1/\e)}}{g}\right)^2 }}.
\end{align*}
If $|g|\geq M\e\sqrt{\ln (1/\e)}$ for some large enough $M>0$, we deduce 
\mbox{$|v(T_\e)|\geq 3/4$}. Thanks to \eqref{eq:modulusg} this implies that $|v(T_\e,x)|\geq 3/4$ for $\dist(x,\lbrace z_j^0\rbrace )\gtrsim \e\sqrt{\ln(1/\e)}$ and concludes the proof of Theorem~\ref{t:zeros}.

\section{Energy of $u$: proof of Theorem~\ref{t:energy}}\label{s:energy}

First, we seek to obtain more precise estimates for $w$ away from the bad disks  $B(z_j^0, C\e\sqrt{\ln(1/\e)})$.
 To this end we localize the equation by setting
\begin{align*}
\tilde w =\chi^2 w ,
\end{align*}
for some appropriate smooth cut-off function $0\leq \chi(x) \leq 1$, to be chosen later. From the equation \eqref{eq:w} satisfied by $w$ we deduce
\begin{align}\label{eq:tildew}
\partial_t\tilde w -\e^2\Delta\tilde w &=
-2(v\cdot\tilde w)v -|v|^2\tilde w 
-e^{-t}\chi^2\mathcal N(v,e^tw)
-e^{-t}\chi^2\mathcal R \\
&\quad
 -\e^2(\Delta\chi^2) w-2\e^2\nabla\chi^2\cdot \nabla w.
\nonumber
\end{align}
Applying Lemma~\ref{l:maxpple} to the equation \eqref{eq:tildew} satisfied by $\tilde w$, and using \eqref{eq:unifboundw}-\eqref{eq:boundgradw}  to estimate the two last terms, we deduce
\begin{align*}
\|e^t\tilde w\|_{L^\infty} &\lesssim
\int_0^t e^{t-s}\|e^s\tilde w\|_{L^\infty}^2\, ds  
+ \int_0^t e^{t-s} \|\chi^2\mathcal R(s)\|_{L^\infty}\, ds \\
&\quad
+(\e\sqrt t \|\nabla\chi\|_{L^\infty} +\e^2\|\nabla^2\chi\|_{L^\infty})\e^2 e^t(e^{2t}-1)^{1/2},
\end{align*}
for all $t\leq \ln(1/\e)- C$. Applying Lemma~\ref{l:aux} we therefore have
\begin{align}\label{eq:boundtildewR}
\|e^t\chi^2w\|_{L^\infty}&\lesssim \int_0^t e^{t-s} \|\chi^2\mathcal R(s)\|_{L^\infty}\, ds\\
&\quad
+(\e\sqrt t \|\nabla\chi\|_{L^\infty} +\e^2\|\nabla^2\chi\|_{L^\infty})\e^2  e^t(e^{2t}-1)^{1/2},
\nonumber
\end{align}
provided $t\leq \ln(1/\e)- C$ and  $\e\sqrt t \|\nabla\chi\|_{L^\infty} +\e^2\|\nabla^2\chi\|_{L^\infty}\leq 1$.
Using the properties \eqref{eq:modulusg} of $g$, the fact that $|\hat z_j(t)-z_j^0|\lesssim \,\e \sqrt{\ln (1/\e)}$ for $t\leq\ln (1/\e)$ thanks to \eqref{eq:estimzj}, and letting
\begin{align*}
D(x)=\mathrm{dist}(x,\lbrace z_j^0\rbrace),
\end{align*}
we have 
\begin{align*}
 D \lesssim |g| \lesssim  D\quad\text{ in }\left\lbrace D\geq M   \e \sqrt{\ln \frac 1\e } \right\rbrace,
\end{align*}
for $t\leq \ln(1/\e)$. Here $M>0$ is a large constant that depends only on $u_0$.
 Since $|\nabla g|\lesssim 1$, recalling the explicit formulas \eqref{eq:R} and \eqref{eq:Phi} we deduce
\begin{align*}
|\mathcal R |&\lesssim \e^2 
\frac{e^t|g|(e^{2t}-1)}{(1+|g|^2(e^{2t}-1))^{3/2}} \\
&\lesssim 
\e^2  e^t(e^{2t}-1)
\frac{ D}{(1+C^{-2}D^2(e^{2t}-1))^{3/2}} ,
\end{align*}
in $\lbrace D\geq M\e\sqrt{\ln (1/\e)}\rbrace$ for $t\leq \ln(1/\e) - C$.
Therefore, choosing cut-off functions $\chi$ satisfying
\begin{align*}
\mathbf 1_{ 2\lambda \leq D \leq 3\lambda}\leq \chi\leq \mathbf 1_{ \lambda \leq D \leq 4\lambda},\quad |\nabla\chi|\lesssim \frac 1\lambda,\;|\nabla^2\chi|\lesssim \frac 1{\lambda^2},
\end{align*}
for some $\lambda \gtrsim \e\sqrt{\ln(1/\e)}$, from
 \eqref{eq:boundtildewR} we infer
 \begin{align}\label{eq:boundwD1}
|e^t w|&\lesssim  \e^2  e^t D\int_0^t 
\frac{e^{2s}-1}{(1+C^{-2}D^2(e^{2s}-1))^{3/2}}
\, ds\\
&\quad
+\frac{\e}{D}\sqrt{1+t}\,
\e^2 e^t(e^{2t}-1)^{1/2}
\nonumber
 \end{align}
in $\lbrace D\geq M  \e\sqrt{\ln(1/\e)}\rbrace$ for $t\leq \ln(1/\e)- C$ and $\e$ small enough.

For any $\alpha\in (0,1/2)$ we have
\begin{align*}
&\int_0^t \frac{e^{2s}-1}{(1+\alpha^2(e^{2s}-1))^{3/2}}\, ds\\
&
=\frac{1}{\alpha^2}
\int_1^{(1+\alpha^2(e^{2t}-1))^{1/2}}
\frac{(x^2-1)}{x^2(x^2-1+\alpha^2)}\, dx
\leq \frac{1}{\alpha^2}\int_1^\infty\frac{dx}{x^2},
\end{align*}
thanks to the change of variable $x=(1+\alpha^2(e^{2s}-1))^{1/2}$. Hence from \eqref{eq:boundwD1} we deduce
\begin{align}\label{eq:boundwD2}
\frac{1}{\e}|e^t w|&\lesssim\,\e\, e^t\,
\frac{\sqrt{1+t}}{D}
 \end{align}
in $\lbrace D\geq M  \e\sqrt{\ln(1/\e)}\rbrace$ for $t\leq \ln(1/\e)- C$ and $\e$ small enough.
Using Lemma~\ref{l:heatgrad} and \eqref{eq:boundwD2} to estimate the right-hand side of \eqref{eq:tildew}, we also obtain gradient bounds 
\begin{align}\label{eq:boundgradwD}
e^t|\nabla w|
&\lesssim \e\, e^t\,\sqrt t \,\frac{\sqrt{1+t}}{D} 
\end{align}
in $\lbrace D\geq M\e\sqrt{\ln(1/\e)} \rbrace$ for $t\leq \ln(1/\e)- C$ and $\e$ small enough.

Next we refine these estimates by including the effect of the second term $-|v|^2\tilde w$ in the right-hand side of \eqref{eq:tildew}. 
We choose as above  a cut-off function $\chi$ supported in $\lbrace \lambda \leq D \leq 4\lambda\rbrace$, with $|\nabla \chi|\lesssim \lambda^{-1}$ and $|\nabla^2\chi|\lesssim \lambda^{-2}$, for some $\lambda\geq M \e\sqrt{\ln\frac{1}{\e}}$.
Combining \eqref{eq:unifboundw}-\eqref{eq:boundgradw} and \eqref{eq:boundwD2}-\eqref{eq:boundgradwD} to bound the two last terms in \eqref{eq:tildew}, we have
\begin{align}\label{eq:tildeF}
\partial_t\tilde w -\e^2\Delta\tilde w &=
-2(v\cdot\tilde w)v -|v|^2\tilde w +\widetilde F,\\
|\widetilde F|&\lesssim e^{-t} \|e^t\tilde w\|_{L^\infty}^2 + e^{-t} \|\chi^2\mathcal R\|_{L^\infty}
\nonumber\\
&\quad + \frac{\e^3}{\lambda}\sqrt {1+t} \,
\min\left(\frac{\sqrt{1+t}}{\lambda},(e^{2t}-1)^{\frac 12}\right).
\nonumber
\end{align}
Arguing as in the proof of Lemma~\ref{l:maxpple} but retaining the second term in the right-hand side of \eqref{eq:tildeF}, we have
\begin{align*}
\partial_t |\tilde w| +|v|^2 |\tilde w | -\e^2 \Delta |w|\leq |\widetilde F|.
\end{align*}
In the support of $\chi$ we have
\begin{align*}
|v|^2 &=\frac{e^{2t}|g|^2}{1+|g|^2(e^{2t}-1)} 
=1-\frac{1-|g|^2}{1+|g|^2(e^{2t}-1)}\\
&
\geq \max\left(1-\frac{e^{-2t}}{C^{2}\lambda^{2}},0\right)
\end{align*}
hence
\begin{align*}
\partial_t |\tilde w| +\max\left(1-\frac{e^{-2t}}{C^{2}\lambda^{2}},0\right) |\tilde w | -\e^2 \Delta |w|\leq |\widetilde F|.
\end{align*}
We rewrite this as
\begin{align*}
\partial_t\,  e^{h(t)}|\tilde w| -\e^2 \Delta \, e^{h(t)}|\tilde w| \leq e^{h(t)} |\widetilde F|,
\end{align*}
where
\begin{align*}
h(t)&=\int_0^t  \max\left(1-\frac{e^{-2t}}{C^{2}\lambda^{2}},0\right)\, ds \\
& =\begin{cases}
0 & \text{ for }0<t<t_\lambda,\\
t-t_\lambda -\frac{1}{2C^2\lambda^2}(e^{-2t}-e^{-2t_\lambda})& \text{ for }t>t_\lambda,
\end{cases}
\end{align*}
where $t_\lambda=\ln(1/(C\lambda))$ is such that $1-e^{-2t_\lambda}/(C^2\lambda^2)=0$.
Arguing again as in Lemma~\ref{l:maxpple} we deduce
\begin{align*}
\|\tilde w(t)\|_{L^\infty}&\leq \int_0^t e^{h(s)-h(t)}\|\widetilde F(s)\|_{L^\infty}\, ds
\\
&=\int_0^{t_\lambda} \|\widetilde F(s)\|_{L^\infty}\, ds\\
& \quad +\int_{t_\lambda}^t e^{s-t} e^{\frac{1}{2C^2\lambda^2}(e^{-2t}-e^{-2s}) }\, \|\widetilde F(s)\|_{L^\infty}\, ds\\
&\leq 
\int_0^{t_\lambda} \|\widetilde F(s)\|_{L^\infty}\, ds
 +C\int_{t_\lambda}^t e^{s-t} \, \|\widetilde F(s)\|_{L^\infty}\, ds
\end{align*}
hence, from the bound on $\widetilde F$ in \eqref{eq:tildeF}, and estimating the term $\chi^2\mathcal R$ exactly as before (because the worst term in \eqref{eq:tildeF} is the last one anyway),
\begin{align*}
 \|e^t\tilde w(t)\|_{L^\infty} 
& \lesssim \int_0^t e^{t-s}\|e^s\tilde w(s)\|_{L^\infty}^2\, ds 
+ \int_0^{t} e^{t-s}\|\chi^2\mathcal R(s)\|_{L^\infty}\, ds\\
&\quad
+\frac{\e^3}{\lambda}e^t\int_0^{t_\lambda}\sqrt{1+s}(e^{2s}-1)^{1/2}\, ds  
+\frac{\e^3}{\lambda^2}\int_{t_\lambda}^t(1+s)\, e^s\, ds\\
&\lesssim 
\int_0^t e^{t-s}\|e^s\tilde w(s)\|_{L^\infty}^2\, ds 
+ \frac{\e^2}{\lambda} e^t 
+\frac{\e^3}{\lambda^2}e^t(1+t).
\end{align*}
Applying Lemma~\ref{l:aux} we obtain
\begin{align}\label{eq:boundwD3}
\frac 1\e |e^t  w | \lesssim \e \, e^{t}\, \frac{ 1+(\e/D)(1+t)}{D},
\end{align}
and with the help of Lemma~\ref{l:heatgrad} the gradient bound
\begin{align}\label{eq:boundgradwD2}
|e^t \nabla w|& \lesssim  \e\, e^t\,\sqrt t \, \frac{ 1+(\e/D)(1+t)}{D}.
\end{align}
These bounds are valid
in $\lbrace D\geq M  \e\sqrt{\ln(1/\e)}\rbrace$ for $t\leq \ln(1/\e)- C$ and $\e$ small enough.
Using
\begin{align*}
&\int_{\lbrace D\geq M\e\sqrt{\ln\frac{1}{\e}} \rbrace}\frac{ 1+(\e^2/D^2)(1+t)^2}{D^2}\, dx \\
&\lesssim  \int_{\e\sqrt{\ln\frac{1}{\e}}}^1 \frac{dr}{r} 
+\,\e^2(1+t)^2 \int_{\e\sqrt{\ln\frac{1}{\e}}}^1 \frac{dr}{r^3}\lesssim \ln\frac 1\e,
\end{align*}
and $u-v=e^tw$, we deduce the  energy bounds
\begin{align*}
&\int_{\lbrace D\geq M\e\sqrt{\ln\frac{1}{\e}} \rbrace}\frac{|u-v|^2}{\e^2}\, dx\lesssim \e^2 e^{2t}\ln\frac 1\e,
\\
&\int_{\lbrace D\geq M\e\sqrt{\ln\frac{1}{\e}} \rbrace} |\nabla u-\nabla v|^2\, dx \lesssim \e^2 e^{2t}t\ln\frac 1\e,
\end{align*}
and, using \eqref{eq:unifboundw}-\eqref{eq:boundgradw} to estimate the contributions from
$\lbrace D\lesssim \e\sqrt{\ln(1/\e)}\rbrace$,
\begin{align}\label{eq:energyu-v}
&\int_{\Omega} \left(
 |\nabla u-\nabla v|^2 + \frac{|u-v|^2}{\e^2}\right)\, dx\lesssim \e^2 e^{2t}t\ln\frac 1\e.
\end{align}
Note that this upper bound is $\lesssim \ln(1/\e)$ at $t=T_\e\leq\ln(1/\e\sqrt{\ln(1/\e)})$.
Next, we derive energy bounds for $v$.
We have
\begin{align*}
1-|v|^2&=1-\frac{e^{2t}|g|^2}{1+|g|^2(e^{2t}-1)}=\frac{1-|g|^2}{1+|g|^2(e^{2t}-1)}\\
&\leq \frac{1}{1+|g|^2(e^{2t}-1)},
\end{align*}
and since $|g|$ is of the same order as $\dist(\cdot,\lbrace \hat z_j(t)\rbrace)$ thanks to \eqref{eq:modulusg}
we deduce
\begin{align*}
\frac{1}{\e^2}\int_\Omega (1-|v|^2)^2\, dx 
&\leq \frac{1}{\e^2} \int_\Omega\frac{1}{(1+|g|^2(e^{2t}-1))^2}\, dx \\
&\lesssim 
\frac{1}{\e^2} \int_0^1 \frac{1}{(1+C^{-2}r^2(e^{2t}-1))^2}\, r\, dr \\
&\lesssim \frac{1}{\e^2}\frac{1}{ e^{2t}-1}
\end{align*}
We also have
\begin{align*}
|\nabla v|&=|D_X\Phi(t,g)\nabla g|
\lesssim \frac{e^t}{(1+|g|^2(e^{2t}-1))^{1/2}},
\end{align*}
hence
\begin{align*}
\int_\Omega |\nabla v|^2&\lesssim  e^{2t}\int_0^1 \frac{1}{1+C^{-2}r^2(e^{2t}-1)}\, r\, dr \\
&\lesssim \frac{e^{2t}}{e^{2t}-1}\ln(1+C^{-2}(e^{2t}-1))
\end{align*}
Gathering the above and recalling $T_\e=\ln(1/\e)-\frac{1}{2}\ln\ln(1/\e)-C_0$, we obtain
\begin{align*}
\int_\Omega \left( |\nabla v|^2+ \frac{1}{\e^2}(1-|v|^2)^2\right)\, dx \lesssim \ln\frac{1}{\eps}\qquad \text{at }t=T_\e.
\end{align*}
Combining this with the bounds \eqref{eq:energyu-v}  concludes the proof of Theorem~\ref{t:energy}.

\section{Jacobian of $u$ : proof of Theorem~\ref{t:jac}}

Define $\mathfrak u_\e(x)=u(T_\e,x)$, where $T_\e=\ln(1/\e)-(1/2)\ln\ln(1/\e)-C_0$ for a large enough constant $C_0$. 
We consider the jacobian
\begin{align*}
J\mathfrak u_\e =\det(\nabla \mathfrak u_\e),
\end{align*}
and show, as $\e\to 0$, the convergence
\begin{align}\label{eq:convJuTe}
J\mathfrak u_\e \to \pi\sum_{j=1}^N \hat d_j \delta_{z_j^0},
\end{align}
in the sense of distributions, where $\hat d_j\in\lbrace \pm 1\rbrace$ is the topological degree of $u_0$ at $z_j^0$.

Note that one can check, by direct calculation, that $Jv(T_\e)$ converges to this  sum of Dirac masses. But the bounds we have obtained on $e^tw=u-v$ are not enough to directly infer \eqref{eq:convJuTe}. Instead we invoke the compactness result of \cite[Theorem~3.1]{JS02-jac}:
thanks to the energy bound
\begin{align*}
E_\e(\mathfrak u_\e)\lesssim \ln\frac 1\e,
\end{align*}
there exists a sequence $\e_n\to 0$, integers $\tilde d_k\in\mathbb Z\setminus\lbrace 0\rbrace$ and distinct points $a_k\in\mathbb T^2$ such that
\begin{align*}
J\mathfrak u_{\e_n} \to \pi\sum_{k=1}^M \tilde d_k \delta_{a_k}.
\end{align*}
We show next that we must have $M=N$,  $\lbrace a_k\rbrace =\lbrace z_j^0\rbrace$,
without loss of generality $a_j=z_j^0$ for $j=1,\ldots,N$, and 
$\tilde d_j=\hat d_j$. 
Therefore the limit is unique and this proves  \eqref{eq:convJuTe}.

First we prove that $\lbrace a_k\rbrace \subset \lbrace z_j^0\rbrace$.
This is a consequence of the bounds obtained above on the map $u$, 
and the fact that the limit of $J\mathfrak u_{\e}$ provides a lower bound for $E_\e(\mathfrak u_\e)/\ln(1/\e)$ \cite[Theorem~4.1]{JS02-jac}. By that lower bound, for any $\delta>0$ we must have
\begin{align*}
E_{\e_n}(\mathfrak u_{\e_n};B(a_k,\delta))\geq \pi |d_k|\ln\frac 1{\e_n} +o\left(\ln\frac 1{\e_n}\right),
\end{align*}
as $n\to\infty$. Note that $|d_k|\geq 1$.
Therefore, to show that $\lbrace a_k\rbrace \subset \lbrace z_j^0\rbrace$ it suffices to obtain an upper bound of the form
\begin{align*}
E_\e(\mathfrak u_\e;B(a,\delta)) \leq\frac\pi 2\ln\frac 1\e\qquad\text{for }\e\ll 1,
\end{align*}
for any $a\notin \lbrace z_j^0\rbrace$ and some $\delta>0$.
Recall that we have $u=v+e^t w$, and the pointwise bounds  from \S~\ref{s:energy},
\begin{align*}
|\nabla v|^2+\frac{1}{\e^2}(1-|v|^2)^2&
\lesssim
\frac{1}{1+D^2 e^{2t}} \left( e^{2t} +\frac{1}{\e^2}\frac{1}{1+D^2e^{2t}}\right)
\\
|\nabla e^t w|^2  +\frac{1}{\e^2}|e^tw|^2
&
\lesssim \e^2e^{2t}\frac{t^2}{D^2},
\end{align*}
in $\lbrace D\geq C \e\ln^{1/2}(1/\e)\rbrace$ and for $1\leq t\leq\ln(1/\e)-C_0$.  
For $t=T_\e=\ln(1/\e)-(1/2)\ln\ln(1/\e)-C_0$ we deduce
\begin{align*}
|\nabla u|^2+\frac{1}{\e^2}(1-|u|^2)^2&
\lesssim
\frac{1+(\e/D)^2 e^{4C_0}\ln^2(1/\e)+e^{-2C_0}\ln(1/\e)}{D^2},
\end{align*}
in $\lbrace D\geq C \e\ln^{1/2}(1/\e)\rbrace$.
Hence at time $t=T_\e$, for any $\delta\geq \e\ln(1/\e)$ and for $\dist(a,\lbrace z_j^0\rbrace)\geq 4\delta$, we have
\begin{align*}
E_\e(\mathfrak u_\e;B(a,\delta))
&\lesssim 1 + \left(\frac{\e}{\delta}\right)^2e^{4C_0}\ln^2\frac 1\e +e^{-2C_0}\ln\frac 1\e \leq  \frac\pi 2\ln\frac 1\e,
\end{align*}
for $\e\ll 1$, 
provided $C_0$ is chosen large enough.
By the above discussion, this implies that $\lbrace a_k\rbrace \subset\lbrace z_j^0\rbrace$.

Therefore we may write
\begin{align}\label{eq:convJutilde}
J\mathfrak u_{\e_n} \to \pi\sum_{j=1}^N \tilde d_j \delta_{z_j^0},
\end{align}
where $\tilde d_j \in\mathbb Z$. 
Here we allow the possibility that $\tilde d_j=0$ because we have not proven yet that $\lbrace z_j^0\rbrace\subset\lbrace a_k\rbrace$.
 To prove \eqref{eq:convJuTe}, it suffices to show that $\tilde d_j=\hat d_j$. To that end, note that for all small $\e>0$ we have
\begin{align*}
\frac{1}{\pi}\int_{B(z_j^0,r)} \det(\nabla \mathfrak u_\e)
=\deg(\mathfrak u_\e,\partial B(z_j^0,r))=\hat d_j,
\end{align*}
for any small $r>0$ and $j=1,\ldots,N$. This is because $t\mapsto u(t)$ is smooth and $u(t)$ doesn't vanish on $\partial B(z_j^0,r)$ for small $\e>0$ and all $t\in [0,T_\e]$, so the degree of $\mathfrak u_\e=u(T_\e)$ is equal to the degree of $u_0=u(0)$, which is $\hat d_j$ by definition. Therefore, testing \eqref{eq:convJutilde} with a test function $\varphi\approx \mathbf 1_{B(z_j^0,r)}$, we obtain $\tilde d_j\approx \hat d_j$, hence $\tilde d_j=\hat d_j$ because these are integers. This concludes the proof of \eqref{eq:convJuTe}.

\bibliographystyle{alphaa}
\bibliography{GLflow}

\end{document}